\newtheorem{theorem}{Theorem}
\newtheorem{theo}{Theorem}
\theoremstyle{definition}
\newtheorem{remark}{Remark}
\newcommand{\bR}{\mathbb{R}}
\newcommand{\ol}{\overline}
\newcommand{\ul}{\underline}
\newcommand{\Int}{{\textstyle \int}}
\newcommand{\diam}{\operatorname{diam}}
\newcommand{\supp}{\operatorname{supp}}
\newcommand{\dist}{\operatorname{dist}}
\numberwithin{equation}{section}
\begin{document}
\title[On Sets of Singular Rotations]{On Sets of Singular Rotations for Translation Invariant Bases}

\author{K. A. Chubinidze}

\begin{abstract}\fontsize{9}{11pt}\selectfont
It is studied the following problem: for a given function $f$ what kind of may be a set of all rotations $\gamma$ for which $\Int f$ is not differentiable with respect to $\gamma$-rotation of a given basis $B$? In particular, for  translation invariant bases on the plane it is found the topological structure of possible sets of singular rotations.

\vskip+0.2cm

\end{abstract}

\maketitle

A mapping $B$ defined on $\mathbb{R}^n$ is said to be a \emph{differen\-tiation basis} if for every $x\in \mathbb{R}^n$, $B(x)$ is a family  of bounded measurable sets with positive measure and containing $x$, such that there exists a sequence $R_k\in B(x)$ $(k\in\mathbb{N})$ with $\lim\limits_{k\rightarrow\infty}\diam R_k = 0$.

For $f\in L(\mathbb{R}^n)$, the numbers
$$
    \overline{D}_B\left(\Int f,x\right)=\mathop{\ol{\lim}}\limits_{\substack{R\in B(x) \\ \diam R\to 0}}
        \frac{1}{|R|} \int_R f \quad \text{and} \quad
    \underline{D}{\,}_B\left(\Int f,x\right)=\mathop{\ul{\lim}}\limits_{\substack{R\in B(x) \\ \diam R\to 0}}
        \frac{1}{|R|} \int_R f
$$
are called \emph{the upper and the lower derivative,} respectively, \emph{of the integral of $f$ at a point $x$}. If the upper and the lower derivative coincide, then their combined value is called the \emph{derivative of $\Int f$ at a point $x$} and denoted by $D_B(\Int f,x)$. We say that the\emph{ basis  $B$ differentiates }$\Int f$ (or $\Int f$ is differentiable with respect to $B$) if $\ol{D}_B(\Int f,x)=\ul{D}{\,}_B(\Int f,x)=f(x)$ for almost all $x\in \bR^n$. If this is true for each $f$ in the class of functions $X$ we say that $B$ differentiates $X$.

Denote by $\textbf{I}=\textbf{I}(\mathbb{R}^n)$ the basis of intervals, i.e., the basis for which $\textbf{I}(x)$ $(x\in \mathbb{R}^n)$ consists of all open $n$-dimensional intervals containing $x$. Note that differentiation with respect to $\textbf{I}$ is called \emph{strong differentiation}.

For a basis $B$  by  $F_B$  denote the class of all functions $f\in L(\mathbb{R}^n)$ the integrals of which are differentiable with respect to $B$.

A basis $B$ is called \emph{translation invariant} (briefly, $TI$-basis) if $B(x)=\{x+R: R\in B(0)\}$ for every $x\in\mathbb{R}^n$;

Denote by $\Gamma_n$ the family of all rotations in the space $\mathbb{R}^n$.

Let $B$ be a basis in $\mathbb{R}^n$ and $\gamma\in \Gamma_n$.  The $\gamma$-\emph{rotated basis} $B$ is defined as follows
$$
B(\gamma)(x)=\{x+\gamma(R - x): R\in B(x)\}\quad (x\in\mathbb{R}^n).
$$

The set of two-dimensional rotations $\Gamma_2$ can be identified with the circumference $\mathbb{T}=\{z\in\mathbb{C}:|z|=1\}$,  if to a  rotation $\gamma$ we put into correspondence the complex number $z(\gamma)$ from $\mathbb{T}$, the argument of which is equal to the value of the angle by which the rotation about the origin takes place in the positive direction under the action of $\gamma$.

The distance $d(\gamma,\sigma)$ between points $\gamma,\sigma\in\Gamma_2$ is assumed to be equal to the length of the smallest arch of the circumference $\mathbb{T}$ connecting  points $z(\gamma)$ and $z(\sigma)$.

Let $B$ and $H$ are bases in $\mathbb{R}^n$ with $B\subset H$ and $E\subset \Gamma_n$. Let us call $E$ a  $W_{B,H}$-\emph{set} ($W_{B,H}^{+}$-\emph{set}),  if there exists a function  $f\in L(\mathbb{R}^n)$ ($f\in L(\mathbb{R}^n), f\geq 0$) such that:
1)$f\notin F_{B(\gamma)}$ for every $\gamma\in E$; and 2) $f\in F_{H(\gamma)}$ for every $\gamma\notin E$;

  Let $B$ and $H$ are bases in $\mathbb{R}^n$ with $B\subset H$ and $E\subset \Gamma_n$. Let us call $E$ an $R_{B,H}$-\emph{set} ($R_{B,H}^{+}$-\emph{set}),  if there exists a function $f\in L(\mathbb{R}^n)$ ($f\in L(\mathbb{R}^n)$, $f\geq 0$) such that:
1)~$\overline{D}_{B(\gamma)}\left(\Int f,x\right)=\infty$ almost everywhere for every $\gamma\in E$; and
2)~$f\in F_{H(\gamma)}$ for every $\gamma\notin E$.

When  $B=H$ we will use terms  $W_B$ $(W_B^{+}$, $R_B$, $R_B^{+})$-set.

\begin{remark}
It is clear that:

1) each $W_{B,H}^{+}(R_{B,H}^{+})$-set is $W_{B,H}$ $(R_{B,H})$-set;

2) each $W_{B,H}$ $(W_{B,H}^{+}$, $R_{B,H}$, $R_{B,H}^{+})$-set is $W_{B}$ $(W_{B}^{+}$, $R_B$, $R_B^{+})$-set.
\end{remark}

The definitions of  $R_{\textbf{I}(\mathbb{R}^2)}, R^{+}_{\textbf{I}(\mathbb{R}^2)}$ and $W_{\textbf{I}(\mathbb{R}^2)}$-sets were introduced in [5], [6] and [1], respectively.

\medskip

Singularities of an integral of a fixed function with respect to the collection of rotated bases $B(\gamma)$ were studied by various authors (see [1-9]). In particular, in [5]  and [1], respectively, there were proved the following results about topological structure of $R_{\textbf{I}(\mathbb{R}^2)}$ and $W_{\textbf{I}(\mathbb{R}^2)}$.

\begin{theo}
Each $R_{\mathbf{I}(\mathbb{R}^2)}$-set has $G_{\delta}$ type.
\end{theo}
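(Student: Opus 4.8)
The plan is to exhibit, for a fixed $f\in L(\mathbb{R}^2)$, the set
$$
E=\{\gamma\in\Gamma_2:\overline D_{\mathbf I(\gamma)}(\Int f,x)=\infty\ \text{a.e.}\}
$$
as a countable intersection of open subsets of $\Gamma_2\cong\mathbb T$. The natural route is to discretize the strong maximal operator. For $\gamma\in\Gamma_2$ write $M^{(\gamma)}f$ for the (uncentered) strong maximal function taken over $\gamma$-rotated intervals. First I would record the elementary continuity fact: for each fixed $x$ and each fixed open interval $R\ni x$, the average $\frac1{|\gamma(R-x)+x|}\int_{\gamma(R-x)+x}f$ — more precisely the average over a slightly enlarged rotated interval — depends in a controlled (upper semicontinuous after sup, lower semicontinuous after a small shrink) way on $\gamma$; the key point is that the indicator of a rotated rectangle varies continuously in $L^1$ as $\gamma$ moves, so $\int_{\gamma(R-x)+x}f\to\int_{\gamma_0(R-x)+x}f$ as $\gamma\to\gamma_0$.

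Next I would reduce the a.e.-infinite condition to a countable family of "large set of large maximal values" conditions. Concretely, $\overline D_{\mathbf I(\gamma)}(\Int f,x)=\infty$ for a.e.\ $x$ is equivalent to: for every $N\in\mathbb N$ and every ball $Q$ with rational centre and radius, the set $\{x\in Q: \sup_{R\in\mathbf I(\gamma)(x),\,\diam R<1/k}\frac1{|\gamma(R-x)+x|}\int_{\gamma(R-x)+x}f>N\}$ has measure $>|Q|-1/m$, for all $k$ and suitable $m$. Thus $E$ is a countable intersection over $(N,Q,m)$ (and a countable union/intersection over auxiliary rational parameters describing the rotated rectangles) of sets of the form $\{\gamma: |A_{N,k}(\gamma)\cap Q|>|Q|-1/m\}$. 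The supremum over rotated intervals $R$ with $\diam R<1/k$ can be restricted, without changing the value up to $\varepsilon$, to a \emph{countable} dense subfamily (rational side-lengths, rational displacement of $x$ inside $R$), so $A_{N,k}(\gamma)$ is a countable union of sets $\{x: \text{one fixed rotated average}>N\}$, each of which — by the $L^1$-continuity of rotated-rectangle indicators above — has the property that $x\mapsto(\text{average})$ is lower semicontinuous in $\gamma$ for fixed $x$. Hence $A_{N,k}(\gamma)\supseteq$ a set that grows (up to null sets) as $\gamma$ varies in a small neighbourhood, and by Fatou/Egorov the function $\gamma\mapsto|A_{N,k}(\gamma)\cap Q|$ is lower semicontinuous. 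Therefore each set $\{\gamma:|A_{N,k}(\gamma)\cap Q|>|Q|-1/m\}$ is open, and $E$, being a countable intersection of such, is $G_\delta$.

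The main obstacle I anticipate is the semicontinuity bookkeeping in the previous paragraph: one must choose the discretizing countable family of rotated rectangles uniformly (independently of $\gamma$) and verify that replacing the genuine sup over all rotated intervals by the sup over this fixed countable family costs nothing in the a.e.-infinite statement — this requires that a $\gamma$-rotated interval can be approximated from inside and outside by $\gamma$-rotates of rationally-parametrized intervals with small symmetric difference, uniformly in $\gamma$, which is where the geometry of $\Gamma_2\cong\mathbb T$ and the compactness of $\mathbb T$ enter. Once the countable family is fixed, the continuity of $\gamma\mapsto\mathbf 1_{\gamma(R-x)+x}$ in $L^1$ and Fatou's lemma do the rest; the passage from "measure $>|Q|-1/m$ for all $Q,m$" back to "$=\infty$ a.e." is a routine exhaustion argument and I would not belabour it.
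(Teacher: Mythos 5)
Your proposal is correct and takes essentially the same route as the paper's own argument (given there for arbitrary translation invariant bases): represent the set as a countable intersection of sets of the form $\{\gamma\in\Gamma_2:|\{N^{r}_{\mathbf I(\gamma)}(f)>\varepsilon\}|>\beta\}$ for truncated maximal averages $N^{r}$, and prove each such set is open using the $L^1$-continuity of rotated averages (absolute continuity of the integral). The only divergence is technical bookkeeping: you discretize the rectangle family and get lower semicontinuity of $\gamma\mapsto|A_{N,k}(\gamma)\cap Q|$ via Fatou, whereas the paper perturbs a witnessing rectangle at each point, stratifies by the modulus of stability $k_x$, and applies continuity of outer measure from below --- two equivalent ways of handling the same issue.
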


\begin{theo}
Each $W_{\mathbf{I}(\mathbb{R}^2)}$-set has $G_{\delta\sigma}$ type.
\end{theo}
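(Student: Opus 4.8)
The plan is to realize $E$ explicitly as a countable union of $G_{\delta}$ subsets of $\Gamma_2$. Let $f\in L(\bR^2)$ be a function showing that $E$ is a $W_{\mathbf{I}(\bR^2)}$-set; by the two defining conditions, $\gamma\in E$ holds precisely when $f\notin F_{\mathbf{I}(\gamma)}$, that is, when $\Int f$ is not differentiable with respect to $\mathbf{I}(\gamma)$. Since $\mathbf{I}$ is a $TI$-basis, for every $x$ the family $\mathbf{I}(\gamma)(x)$ consists exactly of the sets $x+\gamma(Q)$, where $Q$ runs over the open rectangles with edges parallel to the coordinate axes and with $0\in Q$; moreover $|x+\gamma(Q)|=|Q|$ and $\diam\bigl(x+\gamma(Q)\bigr)=\diam Q$. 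Hence $\Int f$ is differentiable with respect to $\mathbf{I}(\gamma)$ at $x$ exactly when $\omega(\gamma,x)=0$, where $\omega(\gamma,x)$ is the upper limit of $\bigl|\tfrac1{|Q|}\int_{x+\gamma(Q)}f-f(x)\bigr|$ as $\diam Q\to0$ over such rectangles $Q$. Putting $N(\gamma)=\{x\in\bR^2:\omega(\gamma,x)>0\}$, one has $\gamma\in E\iff|N(\gamma)|>0$, so it remains to describe $\{\gamma\in\Gamma_2:|N(\gamma)|>0\}$.

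Next I would split $N(\gamma)$ along scales and thresholds. For $m,j\in\bN$ set
$$
V_{m,j}(\gamma)=\Bigl\{x\in\bR^2:\ \exists\,Q,\ 0\in Q,\ \diam Q<\tfrac1j,\ \Bigl|\tfrac1{|Q|}\int_{x+\gamma(Q)}f-f(x)\Bigr|>\tfrac1m\Bigr\}.
$$
Because $Q\mapsto|Q|^{-1}\int_{x+\gamma(Q)}f$ depends continuously on the edges of $Q$, one may restrict $Q$ here to rectangles with rational vertices, and since $x\mapsto|Q|^{-1}\int_{x+\gamma(Q)}f$ is continuous for each fixed $Q$ and $\gamma$, each $V_{m,j}(\gamma)$ is then a measurable subset of $\bR^2$. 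Unravelling the definition of the upper limit shows $N(\gamma)=\bigcup_m\bigcap_j V_{m,j}(\gamma)$, where for fixed $m$ the sets $V_{m,j}(\gamma)$ decrease in $j$ while the sets $\bigcap_j V_{m,j}(\gamma)$ increase in $m$. Writing $B_k=\{x\in\bR^2:|x|<k\}$ and using continuity of the measure along these monotone families, one gets $|N(\gamma)\cap B_k|=\sup_m\inf_j|V_{m,j}(\gamma)\cap B_k|$ for every $\gamma$, whence
$$
E=\bigcup_{k,m,n\in\bN}\ \bigcap_{j\in\bN}\bigl\{\gamma\in\Gamma_2:\ |V_{m,j}(\gamma)\cap B_k|\geq 1/n\bigr\}.
$$

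The decisive step, and the one I expect to be the main obstacle, is to prove that for fixed $m,j,k$ the function $\gamma\mapsto|V_{m,j}(\gamma)\cap B_k|$ is lower semicontinuous on $\Gamma_2$. Granting this, $\{\gamma:|V_{m,j}(\gamma)\cap B_k|\geq 1/n\}$ is a $G_{\delta}$ set, hence so is each inner intersection over $j$ in the displayed formula, and therefore $E$, being a countable union of $G_{\delta}$ sets, is of type $G_{\delta\sigma}$, as required. To prove the semicontinuity, suppose $x\in V_{m,j}(\gamma)$ and fix a rectangle $Q$ realizing the strict defining inequalities; as $f\in L^1(\bR^2)$ and the rotation enters only through the set $x+\gamma(Q)$, the dominated convergence theorem makes $\gamma'\mapsto|Q|^{-1}\int_{x+\gamma'(Q)}f$ continuous in $\gamma'$, so the same $Q$ keeps $x$ in $V_{m,j}(\gamma')$ for all $\gamma'$ in a neighbourhood of $\gamma$. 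Thus, for any sequence $\gamma_\ell\to\gamma$, $\liminf_\ell\mathbf{1}_{V_{m,j}(\gamma_\ell)}(x)\geq\mathbf{1}_{V_{m,j}(\gamma)}(x)$ for every $x$, and Fatou's lemma on $B_k$ yields $\liminf_\ell|V_{m,j}(\gamma_\ell)\cap B_k|\geq|V_{m,j}(\gamma)\cap B_k|$; since $\Gamma_2$ is a metric space, this gives lower semicontinuity. The delicate point is that the neighbourhood on which $x$ stays in $V_{m,j}(\gamma')$ depends on $x$ and may shrink as $x$ varies, so there is no uniform control and one must pass from the pointwise stability to the measure estimate through Fatou's lemma; the remaining manipulations of countable unions and intersections are routine.
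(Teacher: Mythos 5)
Your proposal is correct and is essentially the paper's own argument (the paper deduces Theorem B from its Theorem 1, since $\mathbf{I}$ is translation invariant): your sets $V_{m,j}(\gamma)$ are exactly the paper's level sets $\{l^{1/j}_{\mathbf{I}(\gamma)}(f)>1/m\}$, your decomposition of $E$ by scale, threshold and measure level matches the paper's $W_B(f,\varepsilon,\alpha)$ and $W^r_B(f,\varepsilon,\beta)$, and the key step --- a witnessing rectangle survives a sufficiently small additional rotation by absolute continuity of the integral --- is identical. The only cosmetic difference is that you package this stability as lower semicontinuity of $\gamma\mapsto|V_{m,j}(\gamma)\cap B_k|$ via Fatou's lemma, whereas the paper proves openness of the superlevel sets directly through the exhaustion $A_1\subset A_2\subset\cdots$ and continuity of outer measure from below.
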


There are true the following generalizations of Theorems A and B.

\begin{theorem}\label{t4}
For arbitrary translation invariant basis $B$ in $\mathbb{R}^2$ each $R_B$-set has  $G_{\delta}$ type.
\end{theorem}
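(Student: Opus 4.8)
The plan is to fix a function $f\in L(\bR^2)$ witnessing that $E$ is an $R_B$-set, to establish the identity $E=\{\gm\in\Gm_2:\ \ol{D}_{B(\gm)}(\Int f,x)=\infty\ \text{for a.e. }x\}$, and then to show that the set on the right is $G_\delta$. The identity is immediate: for $\gm\in E$ it is precisely condition~1) in the definition of an $R_B$-set, while for $\gm\notin E$ condition~2) gives $f\in F_{B(\gm)}$, whence $\ol{D}_{B(\gm)}(\Int f,x)=f(x)<\infty$ for a.e. $x$, so the right-hand set is contained in $E$. Thus the whole matter reduces to the topological assertion, which I intend to prove by exhibiting this set as a countable intersection of open subsets of $\Gm_2$.

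The engine is a semicontinuity-in-the-rotation property of truncated maximal functions. By translation invariance of $B$, every member of $B(\gm)(x)$ has the form $x+\gm R_0$ with $R_0\in B(0)$; it has diameter $\diam R_0$ and its mean value of $f$ equals $\frac{1}{|R_0|}\int_{R_0}f(x+\gm s)\,ds$. For a fixed shape $R_0$ and point $x$, the map $\gm\mapsto\frac{1}{|R_0|}\int_{R_0}f(x+\gm s)\,ds$ is continuous on $\Gm_2$: after the substitution $u=\gm s$ its increment is dominated by $\frac{1}{|R_0|}\|f(x+\cdot)-f(x+\rho\cdot)\|_{L^1(\bR^2)}$ with $\rho$ a rotation tending to the identity, and the rotation action on $L^1(\bR^2)$ is continuous. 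Hence, for $k\in\bN$, the truncated maximal function $M^{1/k}_\gm f(x):=\sup\{\frac{1}{|R_0|}\int_{R_0}f(x+\gm s)\,ds:\ R_0\in B(0),\ \diam R_0<1/k\}$ is lower semicontinuous in $\gm$ (a supremum of functions continuous in $\gm$) and is measurable in $x$ (a supremum of functions continuous in $x$, by continuity of translation on $L^1$); moreover $\ol{D}_{B(\gm)}(\Int f,x)=\inf_{k\in\bN}M^{1/k}_\gm f(x)$.

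Next I would unwind the condition ``$\ol{D}_{B(\gm)}(\Int f,\cdot)=\infty$ a.e. on $\bR^2$'' into countably many conditions of the single type ``$M^{1/k}_\gm f>c$ a.e. on $Q$'', with $Q$ ranging over the cubes $(-i,i)^2$, $i\in\bN$, and $c$ over the numbers $n-\frac1m$, $n,m\in\bN$. This is a routine manipulation of null sets, based on the observations that a property holds a.e. on $\bR^2$ iff it holds a.e. on each $(-i,i)^2$; that $\{x:\inf_k M^{1/k}_\gm f(x)<\infty\}=\bigcup_{n,k}\{x:M^{1/k}_\gm f(x)<n\}$ and $\{x:M^{1/k}_\gm f(x)\ge n\}=\bigcap_m\{x:M^{1/k}_\gm f(x)>n-\frac1m\}$ (the latter combined with continuity of Lebesgue measure from above inside the fixed cube); and that a countable union of sets is null iff each of them is. It then suffices to prove that, for each such cube $Q$ and number $c$, the set $\{\gm\in\Gm_2:\ M^{1/k}_\gm f>c\ \text{a.e. on }Q\}$ is $G_\delta$. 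Writing this as $\{\gm:\ |\{x\in Q:M^{1/k}_\gm f(x)>c\}|=|Q|\}$ and applying Fatou's lemma on the finite-measure set $Q$ to the lower semicontinuity in $\gm$ of $x\mapsto M^{1/k}_\gm f(x)$, one gets that $\gm\mapsto|\{x\in Q:M^{1/k}_\gm f(x)>c\}|$ is lower semicontinuous; being bounded above by $|Q|$, the set where it equals $|Q|$ is $\bigcap_{l\in\bN}\{\gm:\ |\{x\in Q:M^{1/k}_\gm f(x)>c\}|>|Q|-\frac1l\}$, a countable intersection of open sets. Since a countable intersection of $G_\delta$ subsets of the metric space $\Gm_2$ is again $G_\delta$, this finishes the proof.

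The main obstacle --- and the reason the argument has this shape --- is that the continuity of $\gm\mapsto\frac{1}{|R_0|}\int_{R_0}f(x+\gm s)\,ds$ is \emph{not} uniform as $\diam R_0\to0$, so the derivative $\ol{D}_{B(\gm)}(\Int f,x)$ itself need not be semicontinuous in $\gm$ at a fixed $x$; only the finitely truncated maximal functions $M^{1/k}_\gm f$ retain lower semicontinuity in $\gm$. What rescues the argument is to pass to ``almost everywhere'' statements by integrating the indicator of $\{M^{1/k}_\gm f>c\}$ over a fixed cube and invoking Fatou, turning pointwise-in-$\gm$ lower semicontinuity into lower semicontinuity of the resulting measure. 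The remaining work is careful bookkeeping: decomposing the ``a.e.'' conditions into exactly the right countable families and, in particular, approaching each integer threshold $n$ from below so as to keep the strict inequalities $M^{1/k}_\gm f>c$ with which lower semicontinuity cooperates.
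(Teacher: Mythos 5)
Your proof is correct and follows essentially the same route as the paper's: both work with the truncated maximal function $N^r_{B(\gamma)}(f)$ (your $M^{1/k}_\gamma f$), both establish that the measure of its superlevel sets depends lower semicontinuously on the rotation $\gamma$ (the paper phrases this as openness of $\{\gamma:|\{N^r_{B(\gamma)}(f)>\varepsilon\}|>\beta\}$, proved by stratifying the superlevel set and using continuity of outer measure from below, where you use Fatou's lemma), and both then exhibit the set of singular rotations as a countable intersection of such open sets. The remaining differences are cosmetic, e.g.\ you obtain the pointwise continuity in $\gamma$ of the averages from continuity of the rotation action on $L^1(\mathbb{R}^2)$, while the paper invokes absolute continuity of the integral over the symmetric difference of a set and its slight rotation about $x$.
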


\begin{theorem}\label{t3}
For arbitrary  translation invariant basis $B$ in $\mathbb{R}^2$ each $W_B$-set has  $G_{\delta \sigma}$ type.
\end{theorem}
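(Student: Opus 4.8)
The plan is to fix once and for all a function $f\in L(\mathbb{R}^2)$ witnessing that $E$ is a $W_B$-set, so that $E=\{\gamma\in\Gamma_2:f\notin F_{B(\gamma)}\}$, and to write $E$ explicitly as a countable union of $G_\delta$ sets. Since $B$ is translation invariant and a rotation preserves measure and diameter, $B(\gamma)(x)=\{x+\gamma Q:Q\in B(0)\}$, so, introducing the truncated maximal function
$$
M^{\gamma}_{\delta}f(x)=\sup\Bigl\{\tfrac{1}{|Q|}\int_{x+\gamma Q}f:Q\in B(0),\ \diam Q<\delta\Bigr\},
$$
we have $\overline{D}_{B(\gamma)}(\Int f,x)=\inf_{m\in\mathbb{N}}M^{\gamma}_{1/m}f(x)$ for every $x$. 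First I would record that $f\notin F_{B(\gamma)}$ if and only if $|A^{+}_\gamma|>0$ or $|A^{-}_\gamma|>0$, where $A^{+}_\gamma=\{x:\overline{D}_{B(\gamma)}(\Int f,x)>f(x)\}$ and $A^{-}_\gamma=\{x:\underline{D}_{B(\gamma)}(\Int f,x)<f(x)\}$; hence, setting $E^{\pm}=\{\gamma:|A^{\pm}_\gamma|>0\}$, we have $E=E^{+}\cup E^{-}$, and since $A^{-}_\gamma$ for $f$ coincides with $A^{+}_\gamma$ for $-f\in L(\mathbb{R}^2)$ it is enough to prove that $E^{+}$ is of type $G_{\delta\sigma}$. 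Interposing rationals and the exhausting squares $[-j,j]^2$, and using that $\overline{D}_{B(\gamma)}(\Int f,x)>f(x)$ holds iff some $c\in\mathbb{Q}$ satisfies $f(x)<c$ and $M^{\gamma}_{1/m}f(x)>c$ for all $m$, one gets
$$
A^{+}_\gamma\cap[-j,j]^2=\bigcup_{c\in\mathbb{Q}}\Bigl(\{x\in[-j,j]^2:f(x)<c\}\cap\bigcap_{m\in\mathbb{N}}\{x:M^{\gamma}_{1/m}f(x)>c\}\Bigr).
$$
Writing $H^{\gamma}_{j,c,m}=\{x\in[-j,j]^2:f(x)<c,\ M^{\gamma}_{1/m}f(x)>c\}$, which is bounded and nonincreasing in $m$, continuity of Lebesgue measure and countable subadditivity give
$$
E^{+}=\bigcup_{j,k\in\mathbb{N}}\,\bigcup_{c\in\mathbb{Q}}\,\bigcap_{m\in\mathbb{N}}\bigl\{\gamma:|H^{\gamma}_{j,c,m}|\ge 1/k\bigr\}.
$$

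The heart of the argument is the claim that for every bounded measurable $P\subset\mathbb{R}^2$, every $c\in\mathbb{R}$ and every $\delta>0$ the map $\gamma\mapsto\bigl|P\cap\{x:M^{\gamma}_{\delta}f(x)>c\}\bigr|$ is lower semicontinuous on $\Gamma_2$. To prove it, fix $Q\in B(0)$ and note that $\Phi_Q(\gamma,x)=\tfrac{1}{|Q|}\int_{x+\gamma Q}f=\tfrac{1}{|Q|}(f*\mathbf{1}_{-\gamma Q})(x)$ is jointly continuous on $\Gamma_2\times\mathbb{R}^2$: for fixed $\gamma$ it is continuous in $x$, being a multiple of the convolution of the $L^1$ function $f$ with a bounded compactly supported function; and it is continuous in $\gamma$ uniformly in $x$, because $|\Phi_Q(\gamma',x)-\Phi_Q(\gamma,x)|\le\tfrac{1}{|Q|}\int_{x+(\gamma'Q\,\triangle\,\gamma Q)}|f|$, the measure $|\gamma'Q\,\triangle\,\gamma Q|$ tends to $0$ as $\gamma'\to\gamma$ by strong continuity of the rotation action on $L^1(\mathbb{R}^2)$, and $\int|f|$ is absolutely continuous. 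Hence $M^{\gamma}_{\delta}f(x)=\sup_Q\Phi_Q(\gamma,x)$ is jointly lower semicontinuous, so $\mathcal{O}:=\{(\gamma,x):M^{\gamma}_{\delta}f(x)>c\}$ is open in $\Gamma_2\times\mathbb{R}^2$; in particular its sections $\mathcal{O}_\gamma$ are open, and if $\gamma_n\to\gamma$ then $\liminf_n\mathbf{1}_{P\cap\mathcal{O}_{\gamma_n}}(x)\ge\mathbf{1}_{P\cap\mathcal{O}_\gamma}(x)$ for every $x$, so Fatou's lemma (integrands dominated by $\mathbf{1}_P\in L^1$) yields $\liminf_n|P\cap\mathcal{O}_{\gamma_n}|\ge|P\cap\mathcal{O}_\gamma|$. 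Along the way this shows $M^{\gamma}_{\delta}f$ and hence $\overline{D}_{B(\gamma)}(\Int f,\cdot)$ are Borel, so all the sets above are genuinely Lebesgue measurable.

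Granting the claim, each set $\{\gamma:|H^{\gamma}_{j,c,m}|\ge 1/k\}=\bigcap_{n\in\mathbb{N}}\{\gamma:|H^{\gamma}_{j,c,m}|>1/k-1/n\}$ is a level set of a lower semicontinuous function, hence $G_\delta$; the intersection over $m$ of such sets is again $G_\delta$, and the countable union over $j,k,c$ is therefore $G_{\delta\sigma}$. Thus $E^{+}$, and by the $-f$ symmetry also $E^{-}$, is $G_{\delta\sigma}$, and hence so is $E=E^{+}\cup E^{-}$. The step demanding the most care is the lower semicontinuity claim, and two features of it deserve emphasis: it is precisely translation invariance that turns the local averages into honest convolutions whose dependence on the rotation can be controlled (so the argument does not survive for a general basis), and the truncated maximal function, being a supremum of continuous functions, is only lower — not upper — semicontinuous, which is exactly why the sets produced are $G_\delta$ rather than closed and why the sharp conclusion is $G_{\delta\sigma}$ rather than $F_\sigma$.
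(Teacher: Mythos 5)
Your proof is correct, and it reaches the conclusion by a route that differs in its working parts from the paper's. The paper keeps the value $f(x)$ inside a single oscillation quantity $l^r_{B(\gamma)}(f)(x)=\sup_{\diam R<r}\bigl|\frac{1}{|R|}\int_Rf-f(x)\bigr|$ and studies the two--parameter family $W_B(f,\ve,\al)=\{\gm:|\{l_{B(\gm)}(f)\ge\ve\}|\ge\al\}$; openness of the truncated version is obtained pointwise (a small rotation of the witnessing set $R_x$ barely changes the average, by absolute continuity of the integral) and is then made uniform on a set of measure $>\beta$ by stratifying the points according to their modulus $k_x$ and invoking continuity of outer measure from below; finally $W_B(f,\ve,\al)$ is exhibited as $\bigcap_kG_k$ using auxiliary sequences $\ve_k\uparrow\ve$, $\al_k\uparrow\al$, and $W_B(f)=\bigcup_kW_B(f,1/k,1/k)$. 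You instead split the singular set into $E^{+}\cup E^{-}$, peel the $f(x)$--dependence off into the fixed ($\gm$--independent) set $\{f<c\}$ via rational thresholds, and reduce everything to one clean lemma: lower semicontinuity of $\gm\mapsto|P\cap\{M^{\gm}_{\delta}f>c\}|$, proved by writing the averages as convolutions $\frac{1}{|Q|}(f*\mathbf{1}_{-\gm Q})$, deducing joint continuity of each $\Phi_Q$ and hence joint lower semicontinuity of the supremum, and applying Fatou to the open sections. The underlying engine is the same in both arguments --- translation invariance turns ``rotate the basis'' into ``rotate the set $Q$ about the origin,'' and absolute continuity of $\Int|f|$ controls the resulting symmetric difference --- but your packaging buys two things the paper leaves implicit: the Borel measurability of $M^{\gm}_{\delta}f$ and of all the level sets (the paper has to hedge with outer measure $|A_m|_{\ast}$ because the measurability of $\{l^r_{B(\gm)}(f)>\ve\}$ is never addressed), and a replacement of the $\ve_k,\al_k$ bookkeeping by the monotonicity in $m$ of $H^{\gm}_{j,c,m}$ together with the elementary fact that $\{g\ge a\}$ is $G_{\delta}$ for $g$ lower semicontinuous. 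The price is the preliminary reduction $E=E^{+}\cup E^{-}$ and the rational--threshold decomposition, which the paper's symmetric oscillation functional avoids.
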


We will also prove the following result.

\begin{theorem}\label{t3}
For arbitrary bases  $B$ and $H$ in $\mathbb{R}^2$  with $B\subset H$ not more than countable union of $R_{B,H}$-sets $(R^{+}_{B,H}$-sets$)$ is $W_{B,H}$-set $(W^{+}_{B,H}$-set$)$.
\end{theorem}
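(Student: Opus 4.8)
The plan is to build a single function $f\in L(\bR^2)$ witnessing $E=\bigcup_{k\in\bN}E_k$ out of the witnesses $f_k$ of the individual sets $E_k$, by restricting each $f_k$ to a small ball, rescaling it, and adding the resulting pieces. Everything will rest on the \emph{locality} of differentiation bases: since each $B(\gamma)(x)$ contains sets of arbitrarily small diameter and every such set contains $x$, a set $R\in B(\gamma)(x)$ with $\diam R$ small lies inside any prescribed neighbourhood of $x$; consequently $\ol D_{B(\gamma)}(\Int g,x)$ and $D_{H(\gamma)}(\Int g,x)$ depend only on the behaviour of $g$ near $x$.

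For the construction, let $f_k\in L(\bR^2)$ (with $f_k\ge0$ in the $R^{+}_{B,H}$ case) be a witness for $E_k$. I would fix open balls $V_k=B(x_k,\rho_k)$ whose closures are pairwise disjoint, with $x_k\to x_\infty$ and $\rho_k\to0$ (for instance $x_\infty=0$, $x_k=(2^{-k},0)$, $\rho_k=2^{-k-5}$), choose $\lambda_k>0$ so small that $\sum_k\lambda_k\int_{\bR^2}|f_k|<\infty$, and set $g_k=\lambda_k f_k\mathbf{1}_{V_k}$ and $f=\sum_k g_k$. Since the $g_k$ have pairwise disjoint supports and summable integrals, $f\in L(\bR^2)$, and $f\ge0$ in the nonnegative case. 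The locality principle yields two facts to be used repeatedly: if $x\in V_k$ and $\diam R<\dist(x,\partial V_k)$ then $R\sbs V_k$, so $\int_R g_k=\lambda_k\int_R f_k$; and if $x\notin\ol{V_k}$ and $\diam R$ is small then $\int_R g_k=0$. From the first, $\ol D_{B(\gamma)}(\Int g_k,x)=\lambda_k\,\ol D_{B(\gamma)}(\Int f_k,x)$ for every $x\in V_k$; from both, together with $f_k\in F_{H(\gamma)}$ for $\gamma\notin E_k$, one obtains $g_k\in F_{H(\gamma)}$ for every such $\gamma$ (interior points of $V_k$ inherit differentiability from $f_k$, exterior points are trivial, and $\partial V_k$ is null).

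I would then verify that $f$ satisfies the two requirements in the definition of a $W_{B,H}$-set. For the first, let $\gamma\in E$ and pick $k_0$ with $\gamma\in E_{k_0}$; as the supports of the remaining $g_k$ miss $V_{k_0}$, we have $f=g_{k_0}$ on $V_{k_0}$, so by the first fact above and the defining property of $f_{k_0}$, $\ol D_{B(\gamma)}(\Int f,x)=\lambda_{k_0}\,\ol D_{B(\gamma)}(\Int f_{k_0},x)=\infty$ for almost every $x\in V_{k_0}$; since $|V_{k_0}|>0$ and $f$ is finite almost everywhere, $f\notin F_{B(\gamma)}$. For the second, let $\gamma\notin E$, so that $g_k\in F_{H(\gamma)}$ for every $k$. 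Given $y\ne x_\infty$, for $k$ large $\ol{V_k}\sbs B(x_\infty,|y-x_\infty|/3)$, which is disjoint from $B(y,|y-x_\infty|/3)$; hence on $U_y:=B(y,|y-x_\infty|/3)$ the function $f$ coincides with a finite sum $\sum_{k\in S_y}g_k$. A finite sum of functions with $H(\gamma)$-differentiable integrals again has $H(\gamma)$-differentiable integral (a routine consequence of the subadditivity of $\ol{\lim}$ and the superadditivity of $\ul{\lim}$), and for $y'\in U_y$ and $R\in H(\gamma)(y')$ of sufficiently small diameter one has $R\sbs U_y$ and $\int_R f=\sum_{k\in S_y}\int_R g_k$; hence $D_{H(\gamma)}(\Int f,y')=f(y')$ for almost every $y'\in U_y$. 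Covering $\bR^2\setminus\{x_\infty\}$, hence almost all of $\bR^2$, by countably many such $U_y$, one concludes $f\in F_{H(\gamma)}$. Thus $E$ is a $W_{B,H}$-set, and $f\ge0$ makes it a $W^{+}_{B,H}$-set in the plus case.

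The step requiring the most care will be the behaviour near the accumulation point $x_\infty$: one must notice that \emph{no} quantitative control of the tail $\sum_{k>N}g_k$ near $x_\infty$ is needed, precisely because $\{x_\infty\}$ is a single point while differentiability is demanded only almost everywhere; away from $x_\infty$ the disjoint-ball construction makes $f$ a locally finite sum, so everything reduces to the routine behaviour of finitely many summands and of the truncations $f_k\mathbf{1}_{V_k}$.
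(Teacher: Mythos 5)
Your proposal is correct and follows essentially the same route as the paper: restrict each witness $f_k$ to one of a family of pairwise disjoint regions, rescale so that the sum is summable, and use locality of the bases to transfer the divergence and differentiability properties to $f=\sum_k g_k$. The one point where you go beyond the paper is worth noting: the paper takes an \emph{arbitrary} family of pairwise disjoint open squares $Q_k$ and simply asserts $D_{H(\gamma)}\left(\Int f,x\right)=f(x)=0$ off their union, whereas your choice of balls with disjoint closures accumulating at a single point makes $f$ locally a finite sum away from a null set and thereby actually justifies that step.
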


\begin{proof}[Proof of Theorem $1$] Let $f \in L(\mathbb{R}^2)$. We must prove that the set
$$
W_B(f)=\{\gamma\in \Gamma_2: f\notin F_{B(\gamma)}\}
$$
is of $G_{\delta \sigma}$ type.

Without loss of generality let us assume  that $f$ is finite everywhere and $\supp f\subset (0,1)^n$.

For a basis $H$,  $x\in\mathbb{R}^2$  and $r>0$ set
\begin{align*}
l_H(f)(x)& =\mathop{\overline{\lim}}\limits_{\substack{R\in H(x)\\ \diam R\to 0}} \bigg|\frac{1}{|R|}\int_R f-f(x)\bigg|,
\\
l_H^r(f)(x)&=\sup_{\substack{R\in H(x) \\ \diam R<r}}\bigg|\frac{1}{|R|}\int_R f-f(x)\bigg|.
\end{align*}

For numbers $\varepsilon>0, \alpha\in (0,1]$, $r>0$ and $\beta\in (0,1)$ denote
$$
W_B(f,\varepsilon,\alpha)=\{\gamma\in \Gamma_2:|\{l_{B(\gamma)}(f)\geq\varepsilon\}|\geq \alpha\},
$$
$$
W_B^{r}(f,\varepsilon,\beta)=\{\gamma\in \Gamma_2:|\{l_{B(\gamma)}^r(f)>\varepsilon\}|>\beta\}.
$$

First let us prove that $W_B^{r}(f,\varepsilon,\beta)$ is an open set for any $r>0$, $\varepsilon>0$ and $\beta\in (0,1)$. Suppose $\gamma\in W_B^{r}(f,\varepsilon,\beta)$, i.e.
$$
|\{l_{B(\gamma)}^r(f)>\varepsilon\}|>\beta.
$$
If $x\in \{l_{B(\gamma)}^r(f)>\varepsilon\}$, then there is $R_x\in B(\gamma)(x)$ with $\diam R_x< r$ such that
$$
\bigg|\frac{1}{|R_x|}\int_{R_x} f-f(x)\bigg|>\varepsilon.
$$
Taking into account absolute continuity of Lebesgue integral is easy to check that performing small enough rotation of $R_x$ around the point $x$ one derives the set $R'_x$ for which
$$
\Big|\frac{1}{|R'_x|}\int_{R'_x} f-f(x)\Big|>\varepsilon.
$$
Therefore for every $x\in \{l_{B(\gamma)}^r(f)>\varepsilon\}$ we can find  $k_x\in\mathbb{N}$ such that
$$
l_{B(\gamma')}^r(f)(x)>\varepsilon\;\;\text{if}\;\;\dist (\gamma',\gamma)<1/k_x.
$$

For every $m\in\mathbb{N}$ by $A_m$ denote the set of all points from $\{l_{B(\gamma)}^r(f)>\varepsilon\}$ for which $k_x=m$. Obviously,
$$
A_1\subset A_2\subset \cdots\;\;\text{and}\;\;\bigcup_{m\in\mathbb{N}} A_m=\{l_{B(\gamma)}^r(f)>\varepsilon\}.
$$
Now, using the property of continuity of outer measure from below we can find $m\in\mathbb{N}$ for which $|A_m|_{\ast}> \beta$. The last conclusion implies that
$$
|\{l_{B(\gamma')}^r(f)>\varepsilon\}|>\beta\;\;\text{if}\;\;\dist(\gamma',\gamma)<1/m.
$$
Consequently, $W_B^{r}(f,\varepsilon,\beta)$ is an open set.

Now let us prove that $W_B(f,\varepsilon,\alpha)$ is of $G_\delta$ type for any  $\varepsilon>0$ and $\delta\in (0,1]$. Let us consider strictly increasing sequences of positive numbers $(\varepsilon_k)$ and $(\alpha_k)$ such that $\varepsilon_k\rightarrow \varepsilon$ and $\alpha_k\rightarrow \alpha$. Taking into account openness of sets $W_B^{r}(f,\varepsilon,\beta)$ it is easy to see that for every $\gamma\in W_B(f,\varepsilon,\alpha)$ and $k\in\mathbb{N}$ there is a neighbourhood $V_{\gamma,k}$ of $\gamma$ such that
$$
|\{l_{B(\gamma')}^{1/k}(f)>\varepsilon_k\}|>\alpha_k\;\;\text{if}\;\;\gamma'\in V_{\gamma,k}.
$$
Denote
$$
G_k=\bigcup_{\gamma\in W_B(f,\varepsilon,\alpha)}V_{\gamma,k}\;\;\;(k\in\mathbb{N}).
$$
Since $W_B(f,\varepsilon,\alpha)\subset G_k\;(k\in\mathbb{N})$, we have $W_B(f,\varepsilon,\alpha)\subset\bigcap\limits_{k\in\mathbb{N}}G_k$. On the other hand, if $\gamma\in \bigcap\limits_{k\in\mathbb{N}}G_k$, then
$$
|\{l_{B(\gamma)}(f)\geq\varepsilon\}|=\bigg|\bigcap\limits_{k\in\mathbb{N}} \{l_{B(\gamma)}^{1/k}(f)>\varepsilon_k\}\bigg|\geq \lim\limits_{k\rightarrow\infty}\alpha_k=\alpha.
$$
Consequently, $\gamma\in W_B(f,\varepsilon,\alpha)$. Thus $W_B(f,\varepsilon,\alpha)\supset \bigcap\limits_{k\in\mathbb{N}}G_k$. So we proved that $W_B(f,\varepsilon,\alpha)= \bigcap\limits_{k\in\mathbb{N}}G_k$, wherefrom it follows the needed conclusion.

It is easy to check that
$$
W_B(f)=\bigcup\limits_{k\in\mathbb{N}}W_B(f,1/k,1/k),
$$
wherefrom we conclude $W_B(f)$ to be of $G_{\delta\sigma}$ type.
\end{proof}

\begin{proof}[Proof of Theorem $2$]
Let $f \in L(\mathbb{R}^2)$ and $\supp f\subset (0,1)^2$. Let us prove that the set
$$
R_B(f)=\{\gamma\in \Gamma_2: \overline{D}_{B(\gamma)}\left(\Int f,x\right)=\infty \;\;\text{a.e. on}\;\;(0,1)^2\}.
$$
is of $G_{\delta}$ type. It is easy to check that this assertion implies the validity of the theorem.

For a basis $H$,  $x\in\mathbb{R}^2$  and $r>0$ set
$$
N_H^r(f)(x)=\sup_{\substack{R\in H(x) \\ \diam R<r}} \frac{1}{|R|}\int_R f.
$$

For numbers $\varepsilon>0$, $r>0$ and $\beta\in (0,1)$ denote
$$
R_B^{r}(f,\varepsilon,\beta)=\{\gamma\in \Gamma_2:|\{N_{B(\gamma)}^r(f)>\varepsilon\}|>\beta\}.
$$

First let us prove that $R_B^{r}(f,\varepsilon,\beta)$ is an open set for any $r>0$, $\varepsilon>0$ and $\beta\in (0,1)$. Suppose $\gamma\in R_B^{r}(f,\varepsilon,\beta)$, i.e.
$$
|\{N_{B(\gamma)}^r(f)>\varepsilon\}|>\beta.
$$
If $x\in \{N_{B(\gamma)}^r(f)>\varepsilon\}$, then there is $R_x\in B(\gamma)(x)$ with $\diam R_x< r$ such that
$$
\frac{1}{|R_x|}\int_{R_x} f>\varepsilon.
$$
Taking into account absolute continuity of Lebesgue integral is easy to check that performing small enough rotation of $R_x$ around the point $x$ one derives the set $R'_x$ for which
$$
\frac{1}{|R'_x|}\int_{R'_x} f>\varepsilon.
$$
Therefore for every $x\in \{N_{B(\gamma)}^r(f)>\varepsilon\}$ we can find  $k_x\in\mathbb{N}$ such that
$$
N_{B(\gamma')}^r(f)(x)>\varepsilon\;\;\text{if}\;\;\dist (\gamma',\gamma)<1/k_x.
$$

For every $m\in\mathbb{N}$ by $A_m$ denote the set of all points from $\{N_{B(\gamma)}^r(f)>\varepsilon\}$ for which $k_x=m$. Obviously,
$$
A_1\subset A_2\subset \dots\;\;\text{and}\;\;\bigcup_{m\in\mathbb{N}} A_{m}=\{N_{B(\gamma)}^r(f)>\varepsilon\}.
$$
Now, using the property of continuity of outer measure from below we can find $m\in\mathbb{N}$ for which $|A_m|_{\ast}> \beta$. The last conclusion implies that
$$
|\{N_{B(\gamma')}^r(f)>\varepsilon\}|>\beta\;\;\text{if}\;\;\dist(\gamma',\gamma)<1/m.
$$
Consequently, $R_B^{r}(f,\varepsilon,\beta)$ is an open set.

Now let us prove that $R_B(f)$ is of $G_\delta$ type. Taking into account openness of sets $R_B^{r}(f,\varepsilon,\beta)$ it is easy to see that for every $\gamma\in R_B(f)$ and $k\in\mathbb{N}$ there is a neighbourhood $V_{\gamma,k}$ of $\gamma$ such that
$$
|\{N_{B(\gamma')}^{1/k}(f)> k\}|>1-1/k\;\;\text{if}\;\;\gamma'\in V_{\gamma,k}.
$$
Denote
$$
G_k=\bigcup_{\gamma\in R_B(f)}V_{\gamma,k}\;\;\;(k\in\mathbb{N}).
$$
Since $R_B(f)\subset G_k\;(k\in\mathbb{N})$, we have $R_B(f)\subset\bigcap\limits_{k\in\mathbb{N}}G_k$. On the other hand, if $\gamma\in \bigcap\limits_{k\in\mathbb{N}}G_k$, then
\begin{multline*}
\left|\left\{\overline{D}_{B(\gamma)}\left(\Int f,x\right)=\infty \;\;\text{a.e. on}\;\;(0,1)^2\right\}\right|= \\
=\bigg|\bigcap\limits_{k\in\mathbb{N}} \{N_{B(\gamma)}^{1/k}(f)>k\}\bigg|\geq
 \lim\limits_{k\rightarrow\infty}(1-1/k)=1.
\end{multline*}
Consequently, $\gamma\in R_B(f)$. Thus $R_B(f)\supset\! \bigcap\limits_{k\in\mathbb{N}}G_k$. So we proved that $R_B(f)\!= \bigcap\limits_{k\in\mathbb{N}}G_k$. Wherefrom it follows the needed conclusion.
\end{proof}

\begin{proof}[Proof of Theorem $3$]
Let $N\subset \mathbb{N}$ be a not more than countable non-empty set  and for each $k\in N$, $E_k$ be an $R_{B,H}$-set($R^{+}_{B,H}$-set). For every $k\in N$ let us consider summable function $f_k$ with  two properties from the definition of $R_{B,H}$-set ($R^{+}_{B,H}$-set): 1) $\overline{D}_{B(\gamma)}\left(\Int f_k,x\right)=\infty$ almost everywhere for every $\gamma\in E_k$; and 2) $f_k\in F_{H(\gamma)}$ for every $\gamma\notin E_k$.  Let us consider also an arbitrary family of pairwise disjoint open squares $Q_k$ $(k\in N)$.

Denote
\begin{gather*}
g_k=\frac{f_k \chi_{Q_k}}{2^k\|f_k\|_L}\quad (k\in N),
\\
f=\sum_{k\in N} g_k.
\end{gather*}
Then we have
$$
\|f\|_L=\sum_{k\in N} \|g_k\|_L \leq \sum_{k\in N}\frac{1}{2^k}<\infty.
$$
Consequently, $f$ is summable function.

Using disjointness of squares $Q_k$ we have that for every $\gamma\in \Gamma_2$, $k\in N$  and $x\in Q_k$
$$
\overline{D}_{B(\gamma)}\left(\Int f,x\right)=\overline{D}_{B(\gamma)}\left(\Int g_k,x\right).
$$
Therefore for every $k\in N$ and  $\gamma\in E_k$
$$
\overline{D}_{B(\gamma)}\left(\Int f,x\right)=\infty\;\;\text{for a.e.}\;\;x\in Q_k.
$$
Thus,
\begin{equation}\label{1}
f\notin F_{B(\gamma)} \;\;\text{for every}\;\; \gamma\in \bigcup_{k\in N}E_k.
\end{equation}

Now take arbitrary $\gamma\notin \bigcup\limits_{k\in N}E_k$. Then $g_k\in F_{H(\gamma)}$ for every $k\in N$. Consequently, using disjointness of squares $Q_k$ we have that for every $k\in N$
$$
D_{H(\gamma)}\left(\Int f,x\right)=D_{H(\gamma)}\left(\Int g_k,x\right)=g_k(x)=f(x)
$$
for a.e. $x\in Q_k$. Thus
$$
D_{H(\gamma)}\left(\Int f,x\right)=f(x)\;\;\text{for a.e.}\;\;x\in \bigcup_{k\in N}Q_k.
$$
Now taking into account that $f(x)=0$ for every $x\notin \bigcup\limits_{k\in N}Q_k$ we write
\begin{equation}
D_{H(\gamma)}\left(\Int f,x\right)=f(x)\;\;\text{for a.e.}\;\;x\in \mathbb{R}^2.
\end{equation}

(1) and (2) implies that  $\bigcup\limits_{k\in N}E_k$ is $W_{B,H}$-set ($W^{+}_{B,H}$-set).
\end{proof}
\bigskip

\

\medskip

Author's address:

\vskip+0.2cm

Akaki Tsereteli State University

59, Tamar Mepe St., Kutaisi 4600

Georgia

e-mail: kaxachubi@gmail.com

\end{document}